\documentclass[11pt]{article}

\usepackage[margin=1in]{geometry}
\usepackage{amsmath,amssymb,amsthm}
\usepackage{mathtools}
\usepackage[hidelinks]{hyperref}

\newtheorem{theorem}{Theorem}
\newtheorem{lemma}[theorem]{Lemma}
\newtheorem{proposition}[theorem]{Proposition}
\newtheorem{corollary}[theorem]{Corollary}

\newcommand{\chips}[1]{\lvert #1\rvert}
\newcommand{\activity}{A}

\title{The Middle Stair for Complete Bipartite Parallel Chip-Firing}
\author{Minkyu Jung\\
\small\texttt{minkyu.jung3@gmail.com}}
\date{August 2026}

\begin{document}
\maketitle

\begin{abstract}
We prove the middle-stair conjecture for every complete bipartite graph.
If a parallel chip-firing game on \(K_{a,b}\) has configuration \(\sigma\)
with
\[
2ab-a-b<\chips{\sigma}<2ab,
\]
then its eventual period is \(2\).  The balanced case \(K_{a,a}\) was
proved by Ji, Li, and Wang using one-parameter conjugate configurations.
We introduce two-parameter conjugates \(c^{k,\ell}\), in which the rank
shift on one side supplies the additive offset on the other.  These
conjugates preserve both the total number of chips and the activity.
An exact Ferrers-diagram count then produces a nonnegative conjugate with
two-round firing coverage on one side.  The coverage propagates in
alternating two-round waves, giving activity \(1/2\); non-clumpiness then
forces period \(2\).
\end{abstract}

\noindent\textbf{Keywords.}
parallel chip-firing; complete bipartite graph; periodic dynamics;
devil's staircase.

\smallskip
\noindent\textbf{2020 Mathematics Subject Classification.}
Primary 05C57; Secondary 68R10.

\section{Introduction}

In the parallel chip-firing game on a finite graph, every vertex with at
least as many chips as its degree simultaneously sends one chip to each
neighbor.  Bitar and Goles introduced the model and observed its eventual
periodicity \cite{bitar-goles}.  Levine showed that activity on complete
graphs exhibits a devil's staircase \cite{levine}.  The middle stair leads
to the conjecture that
\[
2|E|-|V|<\chips{\sigma}<2|E|
\]
forces period \(2\).

Ji, Li, and Wang proved the conjecture for \(K_{a,a}\), as well as excluding
periods \(3\) and \(4\) on general graphs \cite{ji-li-wang}.  Jiang had
previously determined all possible period lengths on \(K_{a,b}\), without
classifying them by chip density \cite{jiang}.  We settle the
density-conditioned question for all \(K_{a,b}\).

\begin{theorem}\label{thm:main}
Let \(a,b\) be positive integers, and let \(\sigma\) be a configuration
for parallel chip-firing on \(K_{a,b}\).
If
\[
2ab-a-b<\chips{\sigma}<2ab,
\]
then the eventual period of \(\sigma\) is \(2\).
\end{theorem}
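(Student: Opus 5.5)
We follow the strategy outlined in the abstract, but we first reduce to activity. On $K_{a,b}$ the eventual dynamics is periodic, say with period $T$, and the activity $\activity(\sigma)$ is the fraction of vertex-rounds in which a vertex fires over one period. We will use two facts. The first is a non-clumpiness lemma for bipartite parallel chip-firing: every vertex fires in exactly half of the rounds, and no vertex fires in two consecutive rounds or idles in two consecutive rounds. The second is that in the eventual periodic regime the firing sequence of each vertex is balanced. Together these show that activity $1/2$ forces period $2$: each vertex alternates between firing and idling, so the configuration after two steps equals the original. The whole task is therefore to show that $\activity(\sigma)=1/2$ whenever $2ab-a-b<\chips{\sigma}<2ab$. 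By the complementation symmetry $\sigma\mapsto(2\deg-1)-\sigma$, which sends the chip count $N$ to $2(2ab)-(a+b)-N$ and the activity $\activity$ to $1-\activity$, it would also suffice to prove $\activity\le 1/2$ and $\activity\ge1/2$ separately. The window is symmetric under this map, so proving one inequality for all $\sigma$ in the window gives the other.

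To compute activity, we pass to the eventual regime. There, on $K_{a,b}$, the vertices on side $A$ (with $a$ vertices, each of degree $b$) are ranked by their chip counts, and the configuration is governed by rank data. We then define two-parameter conjugates $c^{k,\ell}$. Rotating the ranks on side $A$ by $k$ changes how many $A$-vertices fire in each round, and this change enters the $B$-side counts as an additive offset. The offset $\ell$ on side $B$ is chosen to compensate, and symmetrically for the other side. The first step is to verify that $c^{k,\ell}$ is again a valid eventual configuration with the same total $\chips{\sigma}$. This is bookkeeping: the chips gained on one side equal the change in the number of firings on the other. The second step is to verify that the conjugate has the same activity, by exhibiting a bijection between the firing sequences of the orbits, shifted in time.

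The main obstacle is choosing $(k,\ell)$ so that $c^{k,\ell}$ is nonnegative and has two-round coverage on one side, say that every $A$-vertex fires at least once in any two consecutive rounds. For this we plan an exact count. We encode the sorted chip vectors of the two sides as a Ferrers diagram inside an $a\times b$-type box and express $\chips{\sigma}-(2ab-a-b)$ as a count of cells. The strict inequalities $2ab-a-b<\chips{\sigma}<2ab$ should then translate into the statement that some staircase shift $(k,\ell)$ makes every row at least the threshold $b-1$ after two rounds of incoming chips, while keeping every entry nonnegative. We would first try to choose $k$ as the largest rank shift that keeps the minimum entry nonnegative, and then argue by counting cells that the upper bound $\chips{\sigma}<2ab$ prevents overshooting and the lower bound supplies coverage.

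Once coverage holds on side $A$, every $B$-vertex receives at least $a$ chips across each two-round window from the $A$-firings. A short induction then shows that coverage propagates to side $B$ and back, in alternating two-round waves, so every vertex fires at least once in every two rounds. This gives $\activity\ge 1/2$. Applying complementation yields $\activity\le1/2$, hence $\activity=1/2$, and non-clumpiness then forces period $2$.
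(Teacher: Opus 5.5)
Your outline follows the paper's architecture: two-parameter conjugates, a nonnegative witness with two-round coverage on one side, alternating waves, complementation, and non-clumpiness. The gap is the step you yourself flag as the main obstacle. You leave it as a hope (``should then translate''), and it is the actual content of the proof.

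The missing idea is an exact identity that tells you which $\ell$ to take. Sort $y_i=\sigma(L_i)$ and $x_j=\sigma(R_j)$ decreasingly, and set $k_\ell=\#\{i:y_i+\ell\ge b\}$ and $h_\ell=\#\{i:y_i+\ell\ge 2b\}$. Then $\sum_{\ell=1}^{b}(x_\ell+k_\ell+h_\ell)=\chips{\sigma}+a$, because each $y_i\in\{0,\dots,2b-1\}$ contributes $\min(b,y_i+1)+\max(0,y_i-b+1)=y_i+1$. The lower bound $\chips{\sigma}>2ab-a-b$ makes this sum exceed $b(2a-1)$, so some $\ell$ has $x_\ell+k_\ell+h_\ell\ge 2a$. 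Now take $k=k_\ell$, which is your ``largest $k$ keeping the $L$-side nonnegative.'' This gives:
\begin{itemize}
\item nonnegativity on $R$, because $h_\ell\le a$;
\item exactly $h_\ell$ firings in $L$ at round $0$;
\item $\min_R\tau+r_L(\tau)=x_\ell+k-a+h_\ell\ge a$, which is coverage of $R$ in rounds $0,1$.
\end{itemize}
This needs two inputs absent from your plan. First, the confinement $0\le\sigma(v)\le 2\deg(v)-1$ on a nontrivial periodic orbit (the chip window excludes fixed points); without it the identity fails. Second, the within-side spread bound $x_1-x_b<a$, which identifies $\min_R\tau=x_\ell+k-a$. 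The upper bound $\chips{\sigma}<2ab$ plays no role in the witness; it enters only through complementation.

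Two further steps are wrong as written.

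Your ``non-clumpiness lemma'' is misstated. As a general fact it is false: on $K_{2,2}=C_4$, chips $(2,1,1,0)$ around the cycle give period $4$ with firing word $1000$. Restricted to the window, it is the theorem itself. The correct inputs are that a periodic firing word cannot contain both $00$ and $11$, and that all vertices fire equally often on a periodic orbit. At density $1/2$ these two facts force alternation.

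You also should not try to show that $c^{k,\ell}\sigma$ is ``a valid eventual configuration.'' On $K_{1,2}$, write configurations as $(\sigma(L_1);\sigma(R_1),\sigma(R_2))$. Then $\sigma=(1;1,1)$ is $2$-periodic, while $c^{1,1}\sigma=(0;1,2)$ meets every witness requirement yet violates confinement and is transient. What is true, and suffices, is a coupling bound valid for any nonnegative conjugate. The difference $u_t(c^{k,\ell}\sigma,v)-u_t(\sigma,v)$ lies in $[-1,0]$ for $v=L_i$ with $i\le k$ or $v=R_j$ with $j\le\ell$, and in $[0,1]$ otherwise. This is proved by induction from the signs of the shifts, and it gives equal activities without any time-shift bijection.
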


The main device is a two-parameter version of the conjugates used in the
balanced case.  The two parameters are not cosmetic: when \(a\ne b\), they
are what makes both side-total changes cancel.

\section{Preliminaries}

Let \(L=\{L_1,\ldots,L_a\}\) and \(R=\{R_1,\ldots,R_b\}\) be the two sides
of \(K_{a,b}\).  Thus vertices in \(L\) have degree \(b\), while vertices
in \(R\) have degree \(a\).  Write \(U\sigma\) for the result of one
parallel update, and let \(F_t(v)\in\{0,1\}\) indicate whether \(v\) fires
from \(U^t\sigma\).  Put
\[
u_t(\sigma,v)=\sum_{s=0}^{t-1}F_s(v).
\]
For a configuration \(\rho\), let \(r_L(\rho)\) and \(r_R(\rho)\)
denote the numbers of firing vertices in \(L\) and \(R\), respectively.
We write
\[
\chips{\rho_L}=\sum_{v\in L}\rho(v),\qquad
\chips{\rho_R}=\sum_{w\in R}\rho(w).
\]
The activity is the limiting average fraction of firing vertices,
\[
\activity(\sigma)=
\lim_{t\to\infty}\frac{1}{(a+b)t}\sum_{v\in L\cup R}u_t(\sigma,v).
\]

We use three standard facts.  On a periodic orbit of a connected graph,
every vertex fires the same number of times per period
\cite[Proposition~2.5]{jiang}.  A nontrivial periodic configuration
satisfies
\[
0\le \sigma(v)\le 2\deg(v)-1.
\tag{2.1}\label{eq:confined}
\]
This follows from the eventual upper bound and confinement lemmas
\cite[Lemmas~2.1 and~2.2]{jiang}.  Finally, a periodic firing word cannot
contain both \(00\) and \(11\)
\cite[Theorem~6.2]{scully-jiang-zhang}.

\begin{lemma}[Within-side spread]\label{lem:spread}
On a nontrivial periodic orbit,
\[
\max_{v\in L}\sigma(v)-\min_{v\in L}\sigma(v)<b
\]
and
\[
\max_{w\in R}\sigma(w)-\min_{w\in R}\sigma(w)<a.
\]
\end{lemma}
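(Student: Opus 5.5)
The plan is to use the fact that in \(K_{a,b}\) all vertices on one side receive the same input at every step. We then show that a spread of at least \(b\) would force two vertices of \(L\) to fire in lockstep, and that lockstep firing contradicts the confinement bound~\eqref{eq:confined}. The argument for \(R\) is the same with the roles of \(a\) and \(b\) swapped, so we only treat \(L\).

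Fix \(v,v'\in L\) and write \(\sigma_t=U^t\sigma\), where \(\sigma\) lies on a nontrivial periodic orbit of period \(p\). Each vertex of \(L\) gains exactly \(r_R(\sigma_t)\) chips at step \(t\), and a vertex of \(L\) that fires loses \(b\). Hence
\[
d_{t+1}=d_t-b\bigl(F_t(v)-F_t(v')\bigr),\qquad d_t=\sigma_t(v)-\sigma_t(v').
\]
Suppose, for contradiction, that \(d_0\ge b\) for some choice of \(v,v'\).

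First we show that \(d_t\ge b\) persists and that the two vertices fire together. If \(v'\) fires at time \(t\) while \(d_t\ge b\), then \(\sigma_t(v')\ge b\), so \(\sigma_t(v)\ge 2b\). This contradicts~\eqref{eq:confined}, since every configuration on the orbit is periodic and \(\deg v=b\). So whenever \(d_t\ge b\), the vertex \(v'\) does not fire. Then \(F_t(v)-F_t(v')\in\{0,1\}\), and the difference can only decrease, by steps of exactly \(b\).

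Next we bring in periodicity. By \cite[Proposition~2.5]{jiang}, \(v\) and \(v'\) fire the same number of times per period, so \(\sum_{t<p}(F_t(v)-F_t(v'))=0\). While \(d_t\ge b\), each summand is nonnegative, and after one step \(d_{t+1}\) is either \(d_t\) or \(d_t-b\ge 0\). To make this rigorous, we argue step by step: as long as \(d_t\ge b\), every summand is \(\ge 0\). If some summand were \(1\), then \(d\) would drop, and to return to \(d_0\) within the period it would later have to rise, which requires \(F_s(v')=1\), \(F_s(v)=0\) at some time \(s\). Rising from a value \(\ge b\) is impossible, since \(v'\) cannot fire there. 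Rising from a value \(<b\) only reaches a value \(<2b\), and so on.

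The clean way to organize this is to take \(t\) on the orbit with \(d_t\) maximal. Then \(d_t\ge b\), so \(v'\) does not fire at time \(t\). Maximality also rules out \(F_t(v)=0\), \(F_t(v')=1\) at every other time, since that would give \(d_{t+1}>d_t\).

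I expect this bookkeeping to be the main obstacle. The cleanest route is to use the maximality above together with confinement applied at the time the maximal difference is attained.

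The intended conclusion is that \(v'\) never fires while the difference is large, which contradicts the equal firing counts once the orbit is nontrivial. Nontriviality means some vertex fires, and Proposition~2.5 then makes every vertex fire a positive number of times per period. In particular \(v'\) fires at some time \(s\), and at that time \(d_s\le b-1\) is forced. Comparing with the maximal time \(t\), the same firing counts show that \(v\) fires at least as often between \(t\) and \(s\) as \(v'\) does. Going once around the period then forces \(\max_t d_t<b\), which contradicts \(d_0\ge b\).
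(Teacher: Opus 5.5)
There is a genuine gap. Your opening step is correct: if $d_t\ge b$, then \eqref{eq:confined} forbids $v'$ from firing, and in fact forces $v$ to fire, so $d_{t+1}=d_t-b$. The argument breaks after that.

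The maximality claim is false. If $d_t$ is maximal, this only excludes a rise on the step from $t$ to $t+1$. At any other time $s$ with $d_s\le d_t-b$, a step with $F_s(v)=0$, $F_s(v')=1$ is compatible with maximality. Your final paragraph then asserts that ``going once around the period forces $\max_t d_t<b$'' without an argument, and neither maximality nor equal firing counts can supply one.

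For example, let $d$ alternate between some $d_0\ge b$ and $d_0-b\in[0,b-1]$, with $v$ firing at even steps and $v'$ at odd steps. This pattern has equal firing counts, never lets $v'$ fire while $d\ge b$, and attains its maximum infinitely often. Your bound ``rising from a value $<b$ only reaches a value $<2b$'' is exactly where the information needed to exclude it is lost.

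The missing idea is to apply \eqref{eq:confined} at every step, not only where $d$ is large:
\begin{itemize}
\item A rise at step $s$ needs $F_s(v)=0$ and $F_s(v')=1$, i.e.\ $\sigma_s(v)\le b-1<b\le\sigma_s(v')$. So $d_s\le -1$, and $d_{s+1}=d_s+b\le b-1$.
\item Symmetrically, a drop lands in $[1-b,b-1]$.
\item If $F_s(v)=F_s(v')$, both counts lie in the same half, $[0,b-1]$ or $[b,2b-1]$, so $|d_{s+1}|=|d_s|\le b-1$.
\end{itemize}
Hence any configuration that is the image of a confined configuration has $|d|\le b-1$. Since $\sigma$ is the image of its confined predecessor on the orbit, $|d_0|\le b-1$.

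This is exactly the paper's proof, phrased for all of $L$ at once: after an update from a confined predecessor $\tau$, every vertex of $L$ has between $r_R(\tau)$ and $r_R(\tau)+b-1$ chips. Neither Jiang's Proposition~2.5 nor an extremal time is needed.
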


\begin{proof}
Let \(\tau\) be the predecessor of \(\sigma\) on the orbit, and suppose
\(r_R(\tau)\) vertices of \(R\) fire from \(\tau\).  By
\eqref{eq:confined}, whether or not \(v\in L\) fires, its chip count after
the update lies in
\[
[r_R(\tau),\,r_R(\tau)+b-1].
\]
This interval has diameter \(b-1\).  The proof on \(R\) is symmetric.
\end{proof}

\section{Two-parameter conjugates}

Order the vertices within each side so that
\[
\sigma(L_1)\ge\cdots\ge\sigma(L_a),\qquad
\sigma(R_1)\ge\cdots\ge\sigma(R_b).
\]
For \(0\le k\le a\) and \(0\le\ell\le b\), define
\(c^{k,\ell}\sigma\) by
\[
c^{k,\ell}\sigma(L_i)=
\begin{cases}
\sigma(L_i)+\ell-b,&i\le k,\\
\sigma(L_i)+\ell,&i>k,
\end{cases}
\tag{3.1}\label{eq:conjL}
\]
and
\[
c^{k,\ell}\sigma(R_j)=
\begin{cases}
\sigma(R_j)+k-a,&j\le\ell,\\
\sigma(R_j)+k,&j>\ell.
\end{cases}
\tag{3.2}\label{eq:conjR}
\]
The chip change on \(L\) is \(a\ell-bk\), while that on \(R\) is
\(bk-a\ell\).  Hence
\[
\chips{c^{k,\ell}\sigma}=\chips{\sigma}.
\tag{3.3}\label{eq:total}
\]

\begin{proposition}[Activity invariance]\label{prop:conjugacy}
If \(\sigma\) and \(c^{k,\ell}\sigma\) are nonnegative configurations,
then
\[
\activity(c^{k,\ell}\sigma)=\activity(\sigma).
\]
\end{proposition}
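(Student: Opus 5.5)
The plan is to reinterpret \(c^{k,\ell}\) as a \emph{virtual firing} and then track the virtual-firing set along the orbit. Let \(x\in\{0,1\}^{L\cup R}\) be the indicator of \(\{L_1,\dots,L_k\}\cup\{R_1,\dots,R_\ell\}\), and write \(\Delta\) for the graph Laplacian of \(K_{a,b}\).

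\textbf{Step 1 (virtual firing).} Comparing with \eqref{eq:conjL} and \eqref{eq:conjR}, \(c^{k,\ell}\sigma=\sigma-\Delta x\). Indeed, \(L_i\) with \(i\le k\) loses \(b\) and every \(L_i\) gains \(\ell\) from the \(\ell\) ``fired'' right vertices, and symmetrically on \(R\). So \(c^{k,\ell}\sigma\) is what \(\sigma\) would become if the top \(k\) left vertices and the top \(\ell\) right vertices fired once simultaneously, ignoring thresholds. This also explains \eqref{eq:total}.

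\textbf{Step 2 (induction hypothesis).} I would prove by induction on \(t\) that there are top-segment indicators \(x_t\) (top \(k_t\) of \(L\) and top \(\ell_t\) of \(R\) in the current sorted order) with
\[
U^t(c^{k,\ell}\sigma)=U^t\sigma-\Delta x_t,\qquad x_0=x .
\]
In other words, \(U^t(c^{k,\ell}\sigma)=c^{k_t,\ell_t}(U^t\sigma)\). Two facts support this. First, in \(K_{a,b}\) every vertex of a side receives the same input, so the within-side order of chip counts is preserved by \(U\) and the firing set on each side is a top segment. Second, by linearity, the identity at time \(t+1\) holds with
\[
x_{t+1}=x_t+F_t(\sigma)-F_t(c^{k,\ell}\sigma).
\]
What must be checked is that this \(x_{t+1}\) is again a \(\{0,1\}\)-valued top segment.

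\textbf{Step 3 (the case check).} Take \(v\in L\) with \(x_t(v)=1\). Its chip count in the conjugate is lower by exactly \(b\) minus the common right-side offset. If \(v\) fires in \(U^t\sigma\) but not in the conjugate, then \(x_{t+1}(v)=2\). This must be excluded, and the tool is the confinement bound \eqref{eq:confined}, or an a priori bound \(\sigma(v)\le 2\deg(v)-1\) after one step. Concretely, since the shift is \(\ell_t-b\ge -b\), ``\(v\) fires in \(\sigma\) but not in the conjugate'' requires \(\sigma(v)\ge b\) while \(\sigma(v)+\ell_t-b<b\). One then checks that this forces \(x_{t+1}(v)=0\), not \(2\).

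Symmetrically, a vertex with \(x_t(v)=0\) must not fire in the conjugate without firing in \(\sigma\); otherwise \(x_{t+1}(v)=-1\). Here nonnegativity of both configurations is used: the conjugate adds \(+\ell_t\ge 0\) to such \(v\), and any excess is absorbed in exactly the way that turns it into a new virtual firing. The top-segment property of \(x_{t+1}\) then follows from monotonicity in the sorted order.

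\textbf{Step 4 (conclusion).} Summing the identity \(x_{t+1}=x_t+F_t(\sigma)-F_t(c^{k,\ell}\sigma)\) over time gives
\[
u_t(c^{k,\ell}\sigma,v)=u_t(\sigma,v)+x_0(v)-x_t(v)\in\{u_t(\sigma,v)-1,\,u_t(\sigma,v),\,u_t(\sigma,v)+1\}.
\]
Dividing by \((a+b)t\) and letting \(t\to\infty\) yields \(\activity(c^{k,\ell}\sigma)=\activity(\sigma)\).

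\textbf{Main obstacle.} The delicate point is Step 3, keeping \(x_t\in\{0,1\}\). Each configuration is only assumed nonnegative, not confined, so the case analysis may fail at \(t=0\) for configurations with very large counts. If that happens, I would first run both orbits until both are confined; the confinement lemmas cited before \eqref{eq:confined} make this eventual. During that transient the difference of the cumulative firing vectors stays bounded, because \(\Delta\) has kernel spanned by the all-ones vector and the total chips agree. Only then would I start the induction, which changes \(u_t\) by a bounded amount and leaves the activity unaffected.
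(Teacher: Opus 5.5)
Your virtual-firing framing is the right one. With \(z_t=u_t(c^{k,\ell}\sigma,\cdot)-u_t(\sigma,\cdot)\) one has \(x_t=x_0+z_t\), so the invariant \(x_t\in\{0,1\}\) is exactly the paper's bound \eqref{eq:z-bounds}. However, Step~3, which carries the whole proof, fails as written because the recursion in Step~2 has the wrong sign. From \(U\rho=\rho-\Delta F(\rho)\) one gets
\[
U^{t+1}(c^{k,\ell}\sigma)=U^{t}\sigma-\Delta x_t-\Delta F_t(c^{k,\ell}\sigma)
=U^{t+1}\sigma-\Delta\bigl(x_t+F_t(c^{k,\ell}\sigma)-F_t(\sigma)\bigr),
\]
so \(x_{t+1}=x_t+F_t(c^{k,\ell}\sigma)-F_t(\sigma)\).

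Under your sign, the case you flag for \(x_t(v)=1\) (fires in \(U^t\sigma\) but not in the conjugate) really occurs and would give \(2\). It happens already at \(t=0\) with \(k=1\), \(\ell=0\), \(\sigma(L_1)=b\), which is a confined configuration. Neither \eqref{eq:confined} nor anything else excludes it, and your sentence ``one then checks that this forces \(x_{t+1}(v)=0\)'' contradicts your own formula. For \(x_t(v)=0\), the shift \(+\ell_t\ge0\) makes exactly the case you want to exclude possible (take \(U^t\sigma(v)=b-1\) and \(\ell_t\ge1\)), and nonnegativity does nothing about it. The confinement fallback cannot repair this either. First, \(c^{k,\ell}\sigma\) need not satisfy \eqref{eq:confined}. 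Second, restarting the induction after a transient of length \(T\) requires \(x_T\in\{0,1\}^{L\cup R}\), which is the very invariant at issue.

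With the correct sign, the step is a one-line monotonicity check that needs no confinement, no nonnegativity, and no ordering. Suppose \(x_t\in\{0,1\}^{L\cup R}\) and put \(\ell_t=\#\{w\in R:x_t(w)=1\}\). For \(v\in L\), the conjugate orbit has \(-(\Delta x_t)(v)=\ell_t-b\,x_t(v)\) extra chips. If \(x_t(v)=1\), this is \(\ell_t-b\le0\), so \(F_t(c^{k,\ell}\sigma)(v)\le F_t(\sigma)(v)\). If \(x_t(v)=0\), it is \(\ell_t\ge0\), so the reverse inequality holds. Either way \(x_{t+1}(v)\in\{0,1\}\). The side \(R\) is symmetric, using \(k_t-a\le0\) and \(k_t\ge0\) with \(k_t=\#\{v\in L:x_t(v)=1\}\). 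This is precisely the paper's induction.

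You should also drop the top-segment requirement. It rests on the claim that \(U\) preserves within-side order, which is false: on \(K_{2,2}\), \(L\)-counts \((2,1)\) with \(R\)-counts \((0,0)\) become \(L\)-counts \((0,1)\). In any case only \(x_t\in\{0,1\}\) is ever used. Finally, Step~4 should read \(u_t(c^{k,\ell}\sigma,v)=u_t(\sigma,v)+x_t(v)-x_0(v)\); the conclusion is unaffected.
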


\begin{proof}
Write
\[
z_t(L_i)=u_t(c^{k,\ell}\sigma,L_i)-u_t(\sigma,L_i)
\]
and define \(z_t(R_j)\) analogously.  We prove by induction that
\[
\begin{aligned}
-1\le z_t(L_i)\le0&\quad(i\le k),&
0\le z_t(L_i)\le1&\quad(i>k),\\
-1\le z_t(R_j)\le0&\quad(j\le\ell),&
0\le z_t(R_j)\le1&\quad(j>\ell).
\end{aligned}
\tag{3.4}\label{eq:z-bounds}
\]
The case \(t=1\) follows from the signs of the perturbations in
\eqref{eq:conjL}--\eqref{eq:conjR}.

Assume \eqref{eq:z-bounds} at time \(t\).  Then
\[
-\ell\le\sum_{j=1}^b z_t(R_j)\le b-\ell.
\tag{3.5}\label{eq:zRsum}
\]
For \(i\le k\), the difference between the two chip counts at time \(t\)
is
\[
\ell-b+\sum_jz_t(R_j)-b z_t(L_i).
\tag{3.6}\label{eq:state-difference}
\]
If \(z_t(L_i)=0\), this is nonpositive by \eqref{eq:zRsum}; hence a firing
in the conjugate forces a firing in the original.  If
\(z_t(L_i)=-1\), it is nonnegative; hence a firing in the original forces
a firing in the conjugate.  Thus \(-1\le z_{t+1}(L_i)\le0\).
For \(i>k\), replace the initial term \(\ell-b\) in
\eqref{eq:state-difference} by \(\ell\); the same endpoint calculation
gives \(0\le z_{t+1}(L_i)\le1\).

The proof on \(R\) is identical, using
\[
-k\le\sum_{i=1}^a z_t(L_i)\le a-k
\]
and the perturbations \(k-a\) and \(k\).  This proves
\eqref{eq:z-bounds} for all \(t\).  The total difference between the two
cumulative firing counts is therefore uniformly bounded by \(a+b\), so
their activities agree.
\end{proof}

\section{A nonnegative coverage witness}

Put
\[
y_i=\sigma(L_i),\qquad x_j=\sigma(R_j).
\]
For \(1\le\ell\le b\), define
\[
k_\ell=\#\{i:y_i+\ell\ge b\},\qquad
h_\ell=\#\{i:y_i+\ell\ge2b\}.
\tag{4.1}\label{eq:kh}
\]

\begin{lemma}[Ferrers count]\label{lem:ferrers}
If \(\sigma\) satisfies \eqref{eq:confined}, then
\[
\sum_{\ell=1}^b(x_\ell+k_\ell+h_\ell)=\chips{\sigma}+a.
\]
\end{lemma}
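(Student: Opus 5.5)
The identity splits into an \(R\)-part and an \(L\)-part. I would prove it by exchanging the order of summation in the \(L\)-part and counting vertex by vertex, in the manner of reading a Ferrers diagram by columns instead of rows. The \(R\)-part is immediate: since \(x_\ell=\sigma(R_\ell)\) and \(\ell\) runs over \(1,\ldots,b\), we have \(\sum_{\ell=1}^b x_\ell=\chips{\sigma_R}\). It therefore suffices to show
\[
\sum_{\ell=1}^b (k_\ell+h_\ell)=\chips{\sigma_L}+a .
\]

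For this I would rewrite the counts in \eqref{eq:kh} as sums of indicators, \(k_\ell=\sum_{i=1}^a[\,y_i+\ell\ge b\,]\) and \(h_\ell=\sum_{i=1}^a[\,y_i+\ell\ge 2b\,]\). Exchanging the two sums reduces the claim to a per-vertex identity: for each \(i\),
\[
N_i:=\#\{\ell\in[1,b]:\ell\ge b-y_i\}+\#\{\ell\in[1,b]:\ell\ge 2b-y_i\}=y_i+1 .
\]
Summing \(N_i=y_i+1\) over \(i=1,\ldots,a\) then gives \(\chips{\sigma_L}+a\), as required.

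The per-vertex identity is the only real step, and it is where \eqref{eq:confined} enters. A configuration satisfying \eqref{eq:confined} has \(0\le y_i\le 2b-1\), and I would split into two cases.

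If \(0\le y_i\le b-1\), the threshold \(b-y_i\) lies in \([1,b]\), so the first set is \(\{b-y_i,\ldots,b\}\), which has \(y_i+1\) elements. The second threshold \(2b-y_i\) exceeds \(b\), so the second set is empty, and \(N_i=y_i+1\).

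If \(b\le y_i\le 2b-1\), the first threshold is at most \(0\), so all \(b\) values of \(\ell\) count. The second threshold \(2b-y_i\) lies in \([1,b]\), so the second set is \(\{2b-y_i,\ldots,b\}\), which has \(y_i-b+1\) elements. Again \(N_i=b+(y_i-b+1)=y_i+1\).

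The only point needing care is checking that each threshold actually lands in \([1,b]\) in the case where it matters, so that no truncation at \(\ell=1\) or \(\ell=b\) spoils the count. This is exactly what the two bounds \(y_i\ge 0\) and \(y_i\le 2b-1\) from \eqref{eq:confined} guarantee. No other earlier result is needed.
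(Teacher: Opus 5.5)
Your proposal is correct and takes essentially the same approach as the paper, which also counts vertex by vertex: each \(y_i\in\{0,\ldots,2b-1\}\) contributes \(\min(b,y_i+1)\) to \(\sum_\ell k_\ell\) and \(\max(0,y_i-b+1)\) to \(\sum_\ell h_\ell\), for a total of \(y_i+1\). Your two-case split, with its threshold checks, simply evaluates these \(\min\)/\(\max\) expressions explicitly.
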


\begin{proof}
Clearly \(\sum_\ell x_\ell=\chips{\sigma_R}\).  A fixed
\(y_i\in\{0,\ldots,2b-1\}\) contributes
\[
\min(b,y_i+1)
\]
to \(\sum_\ell k_\ell\), and
\[
\max(0,y_i-b+1)
\]
to \(\sum_\ell h_\ell\).  Their sum is \(y_i+1\).  Summing over \(i\)
gives the result.
\end{proof}

\begin{proposition}[Coverage witness]\label{prop:witness}
Suppose \(\sigma\) lies on a nontrivial periodic orbit and
\[
\chips{\sigma}>2ab-a-b.
\]
Then some nonnegative conjugate \(\tau=c^{k,\ell}\sigma\) has every
vertex of \(R\) fire in round \(0\) or round \(1\).
\end{proposition}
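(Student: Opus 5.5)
The plan is to choose the parameters so that the conjugate can be read off the Ferrers count of Lemma~\ref{lem:ferrers}. For \(1\le\ell\le b\), put \(s_\ell=x_\ell+k_\ell+h_\ell\) with \(k_\ell,h_\ell\) as in \eqref{eq:kh}, and take \(\tau=c^{k_\ell,\ell}\sigma\) for a suitable \(\ell\). Since \(\sigma\) is on a nontrivial periodic orbit, \eqref{eq:confined} holds. Lemma~\ref{lem:ferrers} and the hypothesis then give
\[
\sum_{\ell=1}^b s_\ell=\chips{\sigma}+a>2ab-b,
\]
so \(\sum_\ell s_\ell\ge 2ab-b+1\). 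Hence some \(\ell\) has \(s_\ell\ge 2a\), by averaging, since \(b(2a-1)<2ab-b+1\). I fix such an \(\ell\) and set \(k=k_\ell\).

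First I check that \(\tau\) is nonnegative. Because the \(y_i\) are sorted decreasingly, the indices with \(y_i+\ell\ge b\) are exactly \(i\le k_\ell\). So \(\tau(L_i)=y_i+\ell-b\ge0\) for \(i\le k\), and \(\tau(L_i)=y_i+\ell\ge0\) for \(i>k\). On \(R\), the entries \(j>\ell\) only increase. For \(j\le\ell\), the ordering of the \(x_j\) gives
\[
x_j+k_\ell+h_\ell\ge s_\ell\ge 2a.
\]
Since \(h_\ell\le a\), this yields \(\tau(R_j)=x_j+k_\ell-a\ge0\).

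Next I compute round \(0\) on \(L\). For \(i\le k\), \(L_i\) fires from \(\tau\) exactly when \(y_i+\ell-b\ge b\), that is, when \(i\le h_\ell\). For \(i>k\) we have \(\tau(L_i)=y_i+\ell<b\), so these vertices do not fire. Hence \(r_L(\tau)=h_\ell\). A vertex \(R_j\) that does not fire in round \(0\) therefore holds \(\tau(R_j)+h_\ell\) chips in round \(1\). So coverage of \(R_j\) follows once
\[
\tau(R_j)+h_\ell\ge a \quad\text{for every } j.
\]
This reduces to two inequalities:
\[
x_j+k_\ell+h_\ell\ge 2a\quad (j\le\ell),\qquad x_j+k_\ell+h_\ell\ge a\quad (j>\ell).
\]
The first was shown above. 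For the second I invoke Lemma~\ref{lem:spread} on \(R\), which gives \(x_j>x_\ell-a\). Therefore
\[
x_j+k_\ell+h_\ell>s_\ell-a\ge a.
\]

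The step I expect to need the most care is the exact identification of the round-\(0\) firers on \(L\) as the first \(h_\ell\) vertices, including ties in the ordering. Everything else is direct from the definitions. A secondary point is that Lemma~\ref{lem:spread} is applied to \(\sigma\) itself, not to \(\tau\). This is legitimate because the bound \(x_j>x_\ell-a\) concerns the original values only.
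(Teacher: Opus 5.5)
Your proposal is correct and follows the paper's proof essentially step for step. Both arguments average the Ferrers count to find \(\ell\) with \(x_\ell+k_\ell+h_\ell\ge 2a\), take \(\tau=c^{k_\ell,\ell}\sigma\), get nonnegativity from \(h_\ell\le a\), use \(r_L(\tau)=h_\ell\), and apply Lemma~\ref{lem:spread} for the vertices \(R_j\) with \(j>\ell\) (the paper phrases this last step as \(\min_{w\in R}\tau(w)=x_\ell+k-a\)); you also justify why exactly the first \(h_\ell\) vertices of \(L\) fire in round~\(0\), which the paper only asserts.
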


\begin{proof}
By Lemma~\ref{lem:ferrers},
\[
\sum_{\ell=1}^b(x_\ell+k_\ell+h_\ell)
=\chips{\sigma}+a>b(2a-1).
\]
All summands are integers, so for some \(\ell\),
\[
x_\ell+k_\ell+h_\ell\ge2a.
\tag{4.2}\label{eq:large-summand}
\]
Set \(k=k_\ell\) and \(\tau=c^{k,\ell}\sigma\).

By the definition of \(k_\ell\), subtracting \(b\) from the first \(k\)
shifted \(L\)-coordinates leaves them nonnegative; the other shifted
coordinates are also nonnegative.  Since \(h_\ell\le a\),
\eqref{eq:large-summand} gives \(x_\ell+k\ge a\), so the first \(\ell\)
shifted \(R\)-coordinates are nonnegative as well.  Hence \(\tau\) is a
nonnegative configuration.

Lemma~\ref{lem:spread} gives \(x_\ell-x_b<a\).  It follows from
\eqref{eq:conjR} that
\[
\min_{w\in R}\tau(w)=x_\ell+k-a.
\tag{4.3}\label{eq:minR}
\]
Exactly \(h_\ell\) vertices of \(L\) fire in round \(0\) from \(\tau\).
Combining \eqref{eq:large-summand} and \eqref{eq:minR},
\[
\min_{w\in R}\tau(w)+r_L(\tau)
=x_\ell+k-a+h_\ell\ge a.
\]
Every \(R\)-vertex that does not fire in round \(0\) therefore receives
enough chips to fire in round \(1\).
\end{proof}

\section{Propagation and proof of the theorem}

\begin{lemma}[Two-round wave]\label{lem:wave}
If every vertex on one side of a nonnegative configuration fires at least
once in rounds \(t,t+1\), then every vertex on the other side fires at
least once in rounds \(t+1,t+2\).
\end{lemma}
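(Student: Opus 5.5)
The plan is to track a single vertex on the receiving side across three consecutive updates and bound its chip count from below using only the firings of its neighbours. The key structural fact is that in \(K_{a,b}\) every vertex \(w\) on the other side is adjacent to all vertices of the covered side, so \(\deg(w)\) equals the size of the covered side. Without loss of generality I take the covered side to be \(L\), so \(\deg(w)=a\) for \(w\in R\); the other case is symmetric.

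Write \(\rho_s=U^s\rho\) for the configuration at round \(s\), and \(r_L(\rho_s)\) for the number of \(L\)-vertices firing in round \(s\). The hypothesis says every \(L\)-vertex fires in round \(t\) or round \(t+1\). Hence
\[
r_L(\rho_t)+r_L(\rho_{t+1})\ge a .
\]
Fix \(w\in R\). If \(w\) fires in round \(t+1\) there is nothing to prove, so assume it does not. Then
\[
\rho_{t+2}(w)=\rho_{t+1}(w)+r_L(\rho_{t+1}),
\]
and it suffices to show \(\rho_{t+1}(w)\ge r_L(\rho_t)\), because then \(\rho_{t+2}(w)\ge a\) and \(w\) fires in round \(t+2\).

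For this I split according to whether \(w\) fired in round \(t\).
\begin{itemize}
\item If it did not, then \(\rho_{t+1}(w)=\rho_t(w)+r_L(\rho_t)\ge r_L(\rho_t)\). This uses nonnegativity of the configuration, which persists under the dynamics.
\item If it did, then \(\rho_t(w)\ge a\). Hence \(\rho_{t+1}(w)=\rho_t(w)-a+r_L(\rho_t)\ge r_L(\rho_t)\).
\end{itemize}
In both cases the required bound holds, which completes the argument.

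There is no real obstacle. The only points needing care are the case where \(w\) fired in round \(t\), handled by the firing threshold itself, and the fact that nonnegativity of the configuration is preserved by the update rule.
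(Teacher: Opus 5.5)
Your proof is correct and follows the same argument as the paper. You assume the vertex on the uncovered side does not fire in round \(t+1\), split on whether it fired in round \(t\) (using the firing threshold in one case and nonnegativity in the other), and conclude that it holds at least its degree in chips at round \(t+2\); the only differences are cosmetic (you take \(L\) as the covered side and make explicit the bound \(\rho_{t+1}(w)\ge r_L(\rho_t)\) and the preservation of nonnegativity, which the paper uses implicitly).
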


\begin{proof}
Suppose the covered side is \(R\), and fix \(v\in L\).  Assume \(v\) does
not fire in round \(t+1\).  Across rounds \(t,t+1\), it receives at least
\(b=\deg(v)\) chips.  If \(v\) fired in round \(t\), its chip count at the
start of that round was at least \(b\), offsetting the \(b\) chips sent.
If it did not fire, there is no subtraction.  In either case \(v\) has at
least \(b\) chips at the start of round \(t+2\), so it fires then.  The
other direction is symmetric.
\end{proof}

\begin{corollary}\label{cor:half}
Under the hypotheses of Proposition~\ref{prop:witness},
\(\activity(\sigma)\ge1/2\).
\end{corollary}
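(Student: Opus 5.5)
The plan is to pass to the conjugate supplied by Proposition~\ref{prop:witness}, show that it has activity at least \(1/2\) by propagating coverage with Lemma~\ref{lem:wave}, and then transfer the bound back to \(\sigma\) using Proposition~\ref{prop:conjugacy}.

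\textbf{Step 1: choose the witness.} By Proposition~\ref{prop:witness}, there is a nonnegative conjugate \(\tau=c^{k,\ell}\sigma\) in which every vertex of \(R\) fires in round \(0\) or round \(1\). In other words, \(R\) is covered on the window \(\{0,1\}\).

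\textbf{Step 2: propagate the coverage.} Apply Lemma~\ref{lem:wave} with \(t=0\). This shows that every vertex of \(L\) fires at least once in rounds \(1,2\). Applying the lemma again with \(t=1\) and the roles of the sides exchanged, every vertex of \(R\) fires at least once in rounds \(2,3\). Induction on \(t\) then gives the following: for every \(t\ge0\), each vertex on one side (\(R\) if \(t\) is even, \(L\) if \(t\) is odd) fires at least once in rounds \(t,t+1\). Each application of the lemma needs the configuration \(U^t\tau\) to be nonnegative. This holds because parallel chip-firing preserves nonnegativity, since a vertex fires only when it has at least its degree in chips.

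\textbf{Step 3: count firings.} The waves alternate sides, so the conclusions for \(t\) and \(t+1\) concern different sides. To get a bound for each vertex, I track it directly. Fix \(v\in R\). For every even \(t\), \(v\) fires in \(\{t,t+1\}\), and these windows are disjoint. Hence \(u_{2m}(\tau,v)\ge m\). For \(v\in L\), the windows \(\{1,2\},\{3,4\},\dots\) give \(u_{2m+1}(\tau,v)\ge m\). Summing over all \(a+b\) vertices gives
\[
\sum_{v}u_t(\tau,v)\ge (a+b)\bigl(t/2-1\bigr)
\]
for all \(t\). Dividing by \((a+b)t\) and letting \(t\to\infty\) yields \(\activity(\tau)\ge1/2\). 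The limit defining the activity exists by eventual periodicity.

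\textbf{Step 4: transfer back.} Both \(\sigma\) and \(\tau\) are nonnegative: \(\sigma\) because it lies on a periodic orbit and satisfies \eqref{eq:confined}, and \(\tau\) by Proposition~\ref{prop:witness}. Proposition~\ref{prop:conjugacy} therefore gives \(\activity(\sigma)=\activity(\tau)\ge1/2\).

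The main point of care is Step 3. Lemma~\ref{lem:wave} only provides coverage on alternating sides, so each vertex has to be tracked separately rather than bounding the total over each window; the bookkeeping above does this. Everything else is a direct combination of the earlier results.
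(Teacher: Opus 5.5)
Your proposal is correct and follows the paper's proof: take the witness conjugate \(\tau\) from Proposition~\ref{prop:witness}, propagate alternating two-round coverage with Lemma~\ref{lem:wave} to get \(\activity(\tau)\ge 1/2\), and transfer the bound to \(\sigma\) via Proposition~\ref{prop:conjugacy}. Your explicit per-vertex count of \(u_t(\tau,v)\) and your remark that each \(U^t\tau\) stays nonnegative just spell out details the paper leaves implicit.
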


\begin{proof}
Let \(\tau\) be the conjugate supplied by
Proposition~\ref{prop:witness}.  Lemma~\ref{lem:wave} produces alternating
coverage windows
\[
R:[0,1],\quad L:[1,2],\quad R:[2,3],\quad L:[3,4],\ldots.
\]
Thus every vertex has asymptotic firing rate at least \(1/2\), so
\(\activity(\tau)\ge1/2\).  Proposition~\ref{prop:conjugacy} gives
\(\activity(\sigma)=\activity(\tau)\).
\end{proof}

\begin{proof}[Proof of Theorem~\ref{thm:main}]
Move \(\sigma\) to its eventual periodic orbit.  Chip total is conserved.
The strict middle interval excludes a fixed point: on a connected graph,
a fixed point has either no firing vertices or all firing vertices
\cite{bitar-goles}, which gives a chip total outside this interval.
Thus the orbit is nontrivial, and Corollary~\ref{cor:half} applies:
\[
\activity(\sigma)\ge\frac12.
\]

Define the complement
\[
\sigma_c(v)=2\deg(v)-1-\sigma(v).
\]
Complementation interchanges firing and waiting and commutes with the
update \cite[Lemma~2.3]{jiang}.  Moreover,
\[
\chips{\sigma_c}=4ab-a-b-\chips{\sigma}.
\]
The middle interval is invariant under this transformation.  Applying
Corollary~\ref{cor:half} to \(\sigma_c\) gives
\[
1-\activity(\sigma)=\activity(\sigma_c)\ge\frac12.
\]
Hence \(\activity(\sigma)=1/2\).  By
\cite[Proposition~2.5]{jiang}, all vertices fire equally often on the
periodic orbit, so each individual vertex has firing density \(1/2\).

If the period were at least \(3\), each vertex's firing word would be
non-clumpy.  Thus each such word omits cyclically adjacent \(1\)'s or
omits cyclically adjacent \(0\)'s.  At density \(1/2\), either possibility
forces that vertex's word to alternate.
After two rounds every vertex has fired exactly once, and the configuration
returns.  Thus the period is \(2\).
\end{proof}

\section*{Acknowledgement}

\small
The author acknowledges the use of AI-assisted tools in the development
and preparation of this work and takes full responsibility for its
mathematical content.

\end{document}